\def\BibTeX{{\rm B\kern-.05em{\sc i\kern-.025em b}\kern-.08em
    T\kern-.1667em\lower.7ex\hbox{E}\kern-.125emX}
}%
\newcommand\copyrighttext{%
  \footnotesize \textcopyright 2021 IEEE. Personal use of this material is permitted.
  Permission from IEEE must be obtained for all other uses, in any current or future
  media, including reprinting/republishing this material for advertising or promotional
  purposes, creating new collective works, for resale or redistribution to servers or
  lists, or reuse of any copyrighted component of this work in other works.
  DOI: \href{https://ieeexplore.ieee.org/document/9439931}{10.1109/LCSYS.2021.3083467}}
\newcommand\copyrightnotice{%
\begin{tikzpicture}[remember picture,overlay]
\node[anchor=south,yshift=5pt] at (current page.south) {\fbox{\parbox{\dimexpr\textwidth-\fboxsep-\fboxrule\relax}{\copyrighttext}}};
\end{tikzpicture}%
}
    \newcommand{\secref}[1]{Section~\ref{#1}}
    \newcommand{\lemref}[1]{Lemma~\ref{#1}}
    \newcommand{\theoref}[1]{Theorem~\ref{#1}}
    \renewcommand{\eqref}[1]{(\ref{#1})}
    \newcommand{\figref}[1]{Figure~\ref{#1}}
    \newcommand{\tabref}[1]{Table~\ref{#1}}
    \newcommand{\lineref}[1]{Line~\ref{#1}}
    \DeclareMathOperator*{\minimize}{\mathrm{minimize}}
    \tikzset{>=latex}
    \pgfplotsset{compat=1.16}
    \newlength\fheight
    \newlength\fwidth
    \newcommand{\ra}[1]{\renewcommand{\arraystretch}{#1}}
    \newcommand{\R}{\mathbb{R}}
    \newcommand{\I}{\mathbb{I}}
    \newcommand{\sign}{\mathrm{sgn}}
    \newcommand{\penref}{\rho\mathrm{LCQP}}
    \newcommand{\lcqpref}{\mathrm{LCQP}}
    \newcommand{\xkj}{x_{kj}}
    \newcommand{\dkj}{d_{kj}}
    \newcommand{\pkj}{p_{kj}}
    \newcommand{\alphakj}{\alpha_{kj}}
    \newcommand{\qkj}{q_{kj}}
    \newcommand{\lkj}{\ell_{kj}}
    \newcommand{\thetakj}{\vartheta_{kj}}
    \newcommand{\gammakj}{\gamma_{kj}}
    \newcommand{\deltakj}{\delta_{kj}}
    \renewcommand{\tau}{\uptau}
    \newtheorem{theorem}{Theorem}
    \newtheorem{lemma}{Lemma}
    \newtheorem{definition}{Definition}
    \newtheorem{remark}{Remark}
    \newcommand{\expnumber}[2]{{#1}\mathrm{e}{#2}}
    \newcommand{\Nurkanovic}{Nurkanovi\'{c}}
    \newcommand{\KKT}{\textup{KKT}}
    \newcommand{\LICQ}{\textup{LICQ}}
    \newcommand{\LCQP}{\textup{LCQP}}
\begin{document}

\title{A Sequential Convex Programming Approach to Solving Quadratic Programs and Optimal Control Problems with Linear Complementarity Constraints}

\author{\IEEEauthorblockN{}
\IEEEauthorblockA{\textit{Systems Control and Optimization Laboratory} \\
\textit{Albert Ludwig University of Freiburg}\\
Freiburg, Germany
}
}

\author{Jonas Hall$^{1}$, Armin \Nurkanovic$^{1,2}$, Florian Messerer$^{1}$, Moritz Diehl$^{1,3}$%
\thanks{This research was supported by the German Federal Ministry for Economic Affairs and Energy (BMWi) via DyConPV (0324166B), by DFG via Research Unit FOR 2401 and project 424107692, and by the German Federal Ministry of Education and Research (BMBF) via the funded Kopernikus project: SynErgie (03SFK3U0)}%
\thanks{$^{1}$Department of Microsystems Engineering (IMTEK), University of Freiburg, 79110 Freiburg, Germany {\tt\small \{jonas.hall, florian.messerer, moritz.diehl\}@imtek.de}}%
\thanks{$^{2}$Siemens Technology, 81739 Munich, Germany
{\tt\small {armin.nurkanovic}@siemens.com}}%
\thanks{$^{3}$Department of Mathematics, University of Freiburg, 79110 Freiburg, Germany}%
}%

\clearpage\maketitle
\thispagestyle{empty}
\copyrightnotice

\begin{abstract}
    Mathematical programs with complementarity constraints are notoriously difficult to solve due to their nonconvexity and lack of constraint qualifications in every feasible point. This work focuses on the subclass of quadratic programs with linear complementarity constraints. A novel approach to solving a penalty reformulation using sequential convex programming and a homotopy on the penalty parameter is introduced. Linearizing the necessarily nonconvex penalty function yields convex quadratic subproblems, which have a constant Hessian matrix throughout all iterates. This allows solution computation with a single KKT matrix factorization. Furthermore, a globalization scheme is introduced in which the underlying merit function is minimized analytically, and guarantee of descent is provided at each iterate. The algorithmic features and possible computational speedups are illustrated in a numerical experiment.
\end{abstract}

\section{Introduction}
Linear Complementarity Quadratic Programs (LCQP) are quadratic programs with additional complementarity constraints. The complementarity conditions consist of inequality constraints, imposing nonnegativity of the complementary pairs, and a bi-linear equality constraint imposing orthogonality. In order to formalize this, consider an $n$-dimensional input space with $n_C$ complementarity constraints. Let $L,R \in \R^{n_C \times n}$ be the linear input transformations selecting the complementarity pairs. Then a general LCQP can be written as
\begin{mini!}
	{x \in \R^n}{\frac{1}{2}x^\top Q x + g^\top x}
	{\label{eq:LCQP}}{\lcqpref:\quad}
	\addConstraint{0}{\leq A x - b, \label{eq:LCQP:A}}
	\addConstraint{0}{\leq Lx \perp Rx \geq 0, \label{eq:LCQP:comp}}
\end{mini!}
where $0 \prec Q \in \R^{n \times n}, g \in \R^n, A \in \R^{n_A \times n}$, and $b \in \R^{n_A}$. The complementarity constraint~\eqref{eq:LCQP:comp} is a compact notation for
\begin{equation}\label{eq:comp:detail}
    0 \leq Lx \perp Rx \geq 0 ~\Longleftrightarrow~
        \begin{cases}
            0 \leq Lx \\
            0 \leq Rx \\
            0 = x^\top L^\top R x.
        \end{cases}
\end{equation}
An illustrative example is depicted in~\figref{fig:warm:up}. These problems are particularly difficult to solve due to their nonconvexity and nonsmoothness of the feasible set. Moreover, standard constraint qualifications such as the the Linear Independence Constraint Qualification (LICQ) or the weaker Mangasarian-Fromovitz constraint qualification are violated at every feasible point~\cite[Proposition 1.1]{Ye1997}. Thus, it is very difficulty to numerically solve~\eqref{algo:LCQP} directly, as the multipliers are unbounded and the constraint Jacobian matrices are degenerate. Generalizations of~\eqref{eq:LCQP} with nonlinear functions are known as Mathematical Programs with Complementarity Constraints (MPCC). These problems have received a lot of attention and many solution strategies have been proposed, many of which are included in the survey~\cite{Kim2020MPEC}.

\begin{figure}
    \centering
    \resizebox{0.7\linewidth}{!}{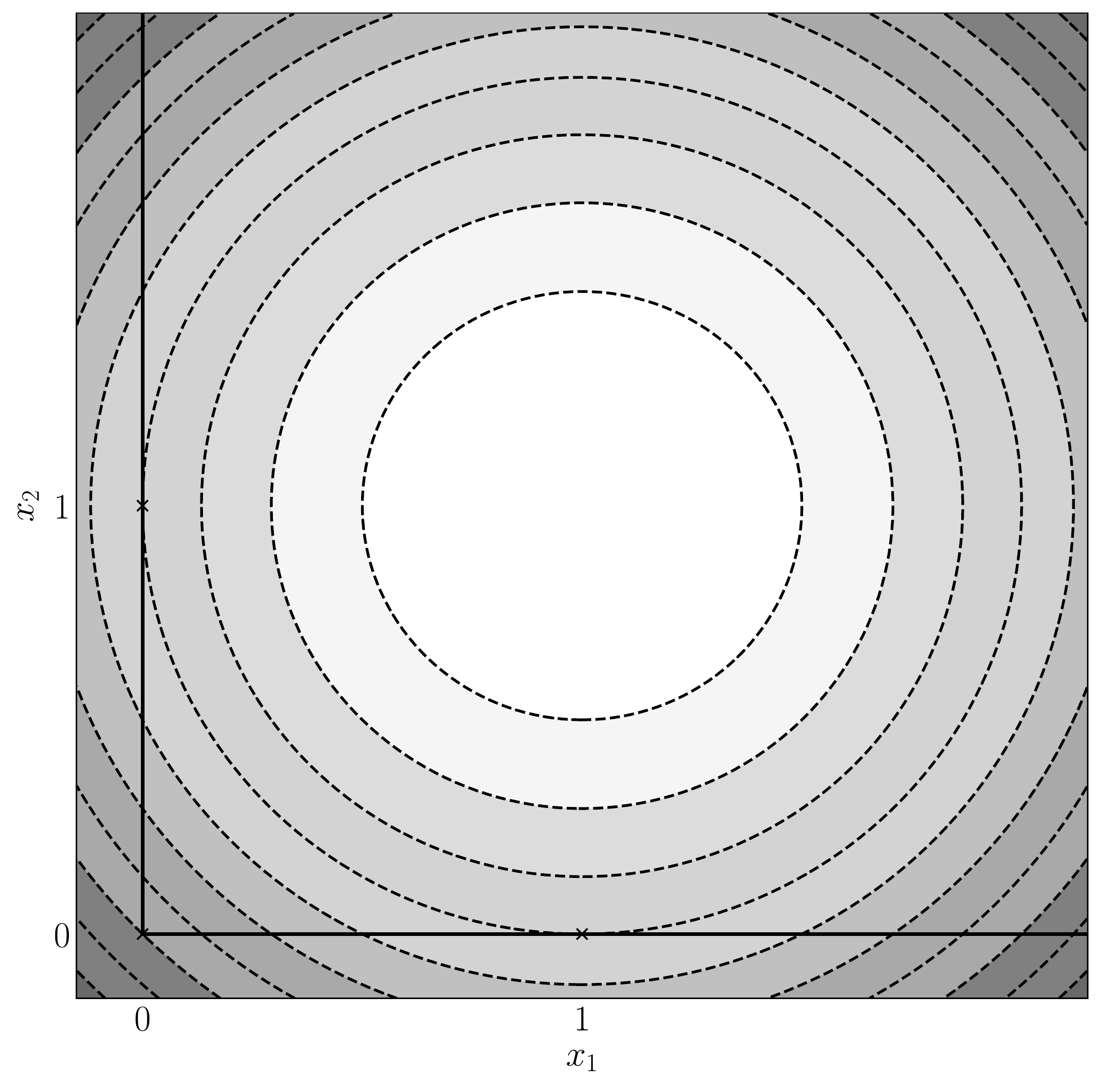}
    \caption{An illustrative LCQP with $Q = 2\I_2$, $g = (-2, -2)^\top$, $L = (1,0)$, and $R = (0,1)$, as originally presented in~\cite{scheel2000mathematical}. The feasible set is depicted by the solid black line. This example contains two strongly stationary points located at $(0, 1)$ and $(1, 0)$ (which are local minima), and one weaker \mbox{(Clarke-)stationary} point at the origin (which is a local maximum).}
    \label{fig:warm:up}
\end{figure}

One popular approach to reformulate an MPCC into a less degenerate Non-Linear Program (NLP) is to remove the bi-linear term from the constraint and penalize its violation in the objective. Convergence and solution equivalence of the two approaches have been studied for example in~\cite{ralph2004some}. This approach is further adapted to an interior point strategy in~\cite{leyffer2006interior}. Similar penalty reformulations can be found in~\cite{abdallah2019solving}, and a more generic form in~\cite{anitescu2005}. Instead of penalizing the bi-linear term, authors have suggested to replace~\eqref{eq:LCQP:comp} with nonlinear complementarity functions, which vanish exactly on the complementarity set. A popular example is the Fisher-Burmeister function~\cite{fischer1995ncp} and adaptations of it~\cite{chen2000penalized}. MPCCs have also been solved as nonlinear programs by replacing the complementarity product with an inequality constraint~\cite{fletcher2004solving}, or with \mbox{(in)equality} regularization schemes~\cite{ralph2004some}. All these approaches lead to solving NLPs where constraint qualifications are satisfied. Usually a sequence of relaxed problems must be solved to recover a solution of the initial MPCC~\cite{ralph2004some, leyffer2006interior}. Their limits in applications to direct optimal control have been shown in~\cite{nurkanovic2020limits}.

MPCCs appear in a wide range of applications in engineering, economics and science. An extensive list of applications is presented in~\cite{ferris1997engineering}. Typical applications in mechanics appear in the context of friction problems or with impacts of rigid bodies~\cite{stewart2011dynamics}. Many of the MPCC examples appear as LCQPs, e.g. in~\cite{stewart1996numerical} or~\cite{stewart2010optimal}.

Other authors have focused on this subclass before. An analysis of C-stationary points using homotopy approaches is provided in~\cite{ralph2011c}. They also appear in the form of subproblems, when nonlinear programs with linear complementarity constraints are solved with a sequential quadratic programming approach~\cite{fukushima1998globally}. LCQPs are also addressed as Mixed-Integer Quadratic Programs (MIQP), for example in a combination with the Benders scheme in~\cite{bai2013convex} or using branch-and-bound techniques~\cite{stellato2018embedded}. Similar to the approach presented here, the latter exploits linear algebra structures by reusing a factorized matrix. MIQP methods can be advantageous in some cases, in particular due to their ability of finding global solutions. However, their relaxations are weak which might result in large search trees and they are thus limited to formulations without too many integer variables~\cite{pang2010three}.

The contribution of this paper is the introduction of a novel algorithm for solving LCQPs. The solution strategy is based on an existing penalty reformulation, which is solved with a Sequential Convex Programming (SCP) approach. Each iterate within the convex programming loop is shown to reduce the merit function. The according step length is controlled by an analytical globalization scheme, which minimizes the merit function in every step.

This paper is structured as follows: \secref{sec:preliminaries} provides the required concepts, including the exactness of the underlying penalty approach. In \secref{sec:algorithmic:development}, the novel approach to solving LCQPs using an SCP technique is presented. Further, a globalization scheme is demonstrated by analytically minimizing an exact merit function. A comparison to three state-of-the-art solution variants is provided in~\secref{sec:numerical:example} by solving an illustrative Optimal Control Problem (OCP). \secref{sec:conclusion:future:work} concludes the paper and highlights further algorithmic improvements.

\section{Penalty Reformulation}\label{sec:preliminaries}
This section briefly addresses the theory of stationarity for MPCCs and the underlying penalty reformulation with its convergence properties. Consider the general LCQP~\eqref{eq:LCQP}. Denote by $\mathcal{L}(x)$ and $\mathcal{R}(x)$ the sets of active constraints among those of $Lx \geq 0$ and $Rx \geq 0$, respectively. Further, let $\mathcal{W}(x) = \mathcal{L}(x) \cap \mathcal{R}(x)$ denote the set of weakly active complementarity pairs. In contrast, let $\bar{\mathcal{L}} = \mathcal{L}\setminus \mathcal{W}$ and $\bar{\mathcal{R}} = \mathcal{R} \setminus \mathcal{W}$ refer to the strongly active complementarity pairs, respectively.
\begin{definition}
    A feasible point $x$ of \LCQP~\eqref{eq:LCQP} is called strongly stationary, if there exist dual variables $y = (y_A, y_L, y_R) \in \R^{n_A} \times \R^{n_C} \times \R^{n_C}$ satisfying
\begin{subequations}\label{eq:weak:stationarity}
	\begin{align}
		Qx + g - A^\top y_A - L^\top y_L - R^\top y_R &= 0, \\
		\min(Ax - b, y_A) &= 0, \\
		y_{L_i} &= 0, \quad i \in \bar{\mathcal{R}}(x), \\
		y_{R_i} &= 0, \quad i \in \bar{\mathcal{L}}(x), \\
        y_{L_i}, y_{R_i} &\geq 0, \quad i \in \mathcal{W}(x).
	\end{align}
\end{subequations}
\end{definition}
\noindent For more details on stationarity of MPCCs, including other stationarity types, refer to~\cite[Section 2]{guo2015solving}.

A popular approach to solving LCQPs are penalty reformulations, and the here presented algorithm is based on a technique as discussed for example in~\cite[Section 1]{ralph2004some}. Consider the penalty function
\begin{equation}\label{eq:pen:fun}
    \varphi(x) = x^\top L^\top R x = \frac{1}{2} x^\top (L^\top R + R^\top L) x = \frac{1}{2} x^\top C x,
\end{equation}
where $C \in \R^{n \times n}$ is the symmetrization of the complementarity product.
\begin{remark}\label{rem:C:indefinite}
    The matrix $C$ is usually indefinite. This is necessarily the case if $L$ and $R$ consist of pairwise orthogonal rows, e.g. if each row selects a distinct optimization variable. For example, the curvature of the complementarity product $x_1^\top x_2$ at the origin towards $(1, 1)$ is positive, whereas it is negative towards $(-1, 1)$.
\end{remark}
The penalty reformulation is obtained by replacing the bi-linear complementarity constraint from~\eqref{eq:LCQP:comp} with the penalty function \eqref{eq:pen:fun} in the objective. The resulting QP reads as
\begin{mini!}
	{x \in \R^n}{\frac{1}{2}x^\top Q x + g^\top x + \rho \cdot \varphi(x) \label{eq:LCQP:pen:outer:obj}}
	{\label{eq:LCQP:pen:outer}}{\penref:\quad}
	\addConstraint{0}{\leq A x - b, \label{eq:LCQP:pen:outer:A}}
    \addConstraint{0}{\leq Lx, \label{eq:LCQP:pen:outer:compl}}
    \addConstraint{0}{\leq Rx, \label{eq:LCQP:pen:outer:comp2}}
\end{mini!}
where $\rho > 0$ is the respective penalty parameter. Throughout this paper all solutions of $\penref$ are assumed to satisfy LICQ. Under this assumption a convergence property of the penalized approach is captured in the following theorem, which represents a special case of the theorem proven by Ralph and Wright in~\cite[Section 5]{ralph2004some}.
\begin{theorem}\label{theo:penalty:convergence}
    Let $\penref$~\eqref{eq:LCQP:pen:outer} satisfy {\LICQ} at $x^\ast \in \R^n$, then the following statements hold:
        \begin{enumerate}
            \item If $x^\ast$ is a strongly stationary point of the \LCQP~\eqref{eq:LCQP}, then there exists a finite $\widetilde{\rho}$, such that $x^\ast$ is a {\KKT} point of~\eqref{eq:LCQP:pen:outer} for all $\rho > \widetilde{\rho}$.
            \item If $x^\ast$ is a {\KKT} point of~\eqref{eq:LCQP:pen:outer} and $\varphi(x^\ast) = 0$, then $x^\ast$ is a strongly stationary point of the \LCQP.
        \end{enumerate}
\end{theorem}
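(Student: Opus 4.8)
My plan is a direct verification that exploits the special structure of $\penref$: since $\varphi$ is quadratic we have $\nabla\varphi(x)=Cx=L^\top(Rx)+R^\top(Lx)$ exactly, and feasibility for $\penref$ already enforces $Lx\ge 0$ and $Rx\ge 0$. Writing the {\KKT} system of $\penref$ with multipliers $\mu_A,\mu_L,\mu_R\ge 0$, its stationarity equation is $Qx+g+\rho Cx-A^\top\mu_A-L^\top\mu_L-R^\top\mu_R=0$. I would introduce the multiplier substitution $\mu_A=y_A$, $\mu_L=y_L+\rho\,Rx$, $\mu_R=y_R+\rho\,Lx$; since $L^\top\mu_L+R^\top\mu_R=L^\top y_L+R^\top y_R+\rho Cx$, the $\rho Cx$ term cancels and this equation becomes exactly the gradient condition of strong stationarity in~\eqref{eq:weak:stationarity}. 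So in both directions the gradient conditions are equivalent, and all that remains is to translate the sign/complementarity conditions on $\mu$ into those on $y$ and back, while tracking the index sets $\bar{\mathcal{L}}(x^\ast),\bar{\mathcal{R}}(x^\ast),\mathcal{W}(x^\ast)$.

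For~(i), let $(y_A,y_L,y_R)$ be strong-stationarity multipliers at $x^\ast$; then $x^\ast$ is feasible for $\penref$ (the inequality constraints of $\penref$ are precisely those satisfied by a feasible point of~\eqref{eq:LCQP}), and I set $\mu_A=y_A$, $\mu_L=y_L+\rho\,Rx^\ast$, $\mu_R=y_R+\rho\,Lx^\ast$. The gradient equation holds by the cancellation above; $\mu_A=y_A$ inherits $0\le Ax^\ast-b\perp\mu_A\ge 0$ from $\min(Ax^\ast-b,y_A)=0$; and for the $L$-block: if $(Lx^\ast)_i>0$ then by complementarity $(Rx^\ast)_i=0$ and $i\in\bar{\mathcal{R}}(x^\ast)$, so $y_{L_i}=0$ and hence $\mu_{L_i}=0$; if $(Lx^\ast)_i=0$ the $i$-th product vanishes trivially and one only needs $\mu_{L_i}\ge 0$, which is immediate when $i\in\mathcal{W}(x^\ast)$ (there $y_{L_i}\ge 0$ and $(Rx^\ast)_i=0$) and, when $i\in\bar{\mathcal{L}}(x^\ast)$ (there $(Rx^\ast)_i>0$ but $y_{L_i}$ is sign-unrestricted), holds as soon as $\rho\ge -y_{L_i}/(Rx^\ast)_i$. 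The $R$-block is symmetric, so $x^\ast$ is a {\KKT} point of $\penref$ for every $\rho>\widetilde\rho$, where $\widetilde\rho$ is the finite maximum of $0$ and the finitely many ratios $-y_{L_i}/(Rx^\ast)_i$, $i\in\bar{\mathcal{L}}(x^\ast)$, and $-y_{R_i}/(Lx^\ast)_i$, $i\in\bar{\mathcal{R}}(x^\ast)$.

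For~(ii), $\varphi(x^\ast)=(Lx^\ast)^\top(Rx^\ast)=0$ together with $Lx^\ast,Rx^\ast\ge 0$ forces $(Lx^\ast)_i(Rx^\ast)_i=0$ for all $i$, so $x^\ast$ is feasible for~\eqref{eq:LCQP}. I then invert the substitution, $y_A=\mu_A$, $y_L=\mu_L-\rho\,Rx^\ast$, $y_R=\mu_R-\rho\,Lx^\ast$, and verify: the gradient condition again by the cancellation; $\min(Ax^\ast-b,y_A)=0$ directly from the $A$-complementarity of $\penref$; for $i\in\bar{\mathcal{R}}(x^\ast)$, $(Lx^\ast)_i>0$ gives $\mu_{L_i}=0$ and $(Rx^\ast)_i=0$, so $y_{L_i}=0$; symmetrically $y_{R_i}=0$ on $\bar{\mathcal{L}}(x^\ast)$; and on $\mathcal{W}(x^\ast)$, $(Lx^\ast)_i=(Rx^\ast)_i=0$, so $y_{L_i}=\mu_{L_i}\ge 0$ and $y_{R_i}=\mu_{R_i}\ge 0$. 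Hence $x^\ast$ is strongly stationary.

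I do not expect a genuine obstacle here — the argument is essentially bookkeeping — but two points need care. First, feasibility of~\eqref{eq:LCQP} must be in force for every complementarity index to lie in exactly one of $\bar{\mathcal{L}},\bar{\mathcal{R}},\mathcal{W}$; this is hypothesised in~(i) and is exactly what $\varphi(x^\ast)=0$ buys in~(ii). Second, the lower bound on $\rho$ in~(i) is genuinely necessary, because strong stationarity leaves $y_{L_i}$ (resp.\ $y_{R_i}$) free in sign on $\bar{\mathcal{L}}$ (resp.\ $\bar{\mathcal{R}}$), whereas the penalty multipliers must be nonnegative there; the shift by $\rho\,Rx^\ast$ (resp.\ $\rho\,Lx^\ast$) — strictly positive on those index sets — repairs this once $\rho$ is large enough. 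The {\LICQ} hypothesis plays no role in this verification; it is retained so that ``{\KKT} point'' is the appropriate stationarity concept for $\penref$ and so that the statement aligns with the general theorem of Ralph and Wright~\cite[Section 5]{ralph2004some}.
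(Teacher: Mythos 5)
Your proof is correct, and it takes a genuinely different route from the paper: the paper does not prove Theorem~\ref{theo:penalty:convergence} at all, but defers to the general MPCC penalty theorem of Ralph and Wright~\cite[Section~5]{ralph2004some}, which is an asymptotic result about sequences of penalized stationary points for nonlinear problems. Your argument instead exploits the fact that everything here is linear--quadratic, so $\nabla\varphi(x)=Cx=L^\top(Rx)+R^\top(Lx)$ holds exactly and the two stationarity systems can be mapped onto each other by the explicit multiplier shift $\mu_L=y_L+\rho\,Rx^\ast$, $\mu_R=y_R+\rho\,Lx^\ast$; the case analysis over $\bar{\mathcal{L}},\bar{\mathcal{R}},\mathcal{W}$ is complete and each sign/complementarity condition is accounted for, the finite threshold $\widetilde\rho=\max\bigl\{0,\max_{i\in\bar{\mathcal{L}}}-y_{L_i}/(Rx^\ast)_i,\ \max_{i\in\bar{\mathcal{R}}}-y_{R_i}/(Lx^\ast)_i\bigr\}$ is exactly right (and in fact yields the conclusion already for $\rho\ge\widetilde\rho$), and your observation that \LICQ{} is not needed for either direction is accurate, since in both directions you exhibit the multipliers rather than invoke a constraint qualification to produce them. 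What the citation-based route buys the paper is generality (nonlinear MPCCs, convergence of inexact stationary sequences); what your route buys is a short, self-contained, and fully constructive proof of the special case actually stated, including an explicit value of $\widetilde\rho$ in terms of the strong-stationarity multipliers. The only presentational point worth tightening is to state explicitly at the outset that for any point feasible for~\eqref{eq:LCQP} every complementarity index lies in exactly one of $\bar{\mathcal{L}}(x^\ast),\bar{\mathcal{R}}(x^\ast),\mathcal{W}(x^\ast)$, since the whole case split rests on it; you do flag this, so it is a matter of emphasis rather than a gap.
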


\section{Algorithmic Development}\label{sec:algorithmic:development}
The above motivates finding stationary points of the LCQP by solving~\eqref{eq:LCQP:pen:outer} for a penalty large enough to ensure satisfaction of the complementarity constraints. In this section an approach to finding solutions via a penalty homotopy together with a sequential convex programming approach is introduced. Finally, an analytical globalization scheme with a guarantee of descent is provided.

\subsection{Penalty Homotopy}
For a given penalty parameter $\rho_k > 0$ the respective penalty reformulation~\eqref{eq:LCQP:pen:outer} is solved. Subsequently, the penalty parameter $\rho_{k+1} = \beta \rho_k$ is updated for a fixed $\beta > 1$. This procedure is repeated until complementarity is satisfied. \theoref{theo:penalty:convergence} ensures that if the LCQP has a strongly stationary point, then this point must also be a KKT point of the penalized reformulation with respect to a finite penalty parameter $\tilde{\rho}$. One could consider simply solving~\eqref{eq:LCQP:pen:outer} for a very large penalty in the hope of instantly satisfying complementarity. However, these penalty reformulations often become ill-conditioned for large penalty parameters as the nonconvex part becomes dominant. Due to this nonconvexity, the original LCQP may contain many local solutions. Solving the penalized subproblem for a very small penalty parameter leads to a solution close to the global minimum of \eqref{eq:LCQP} without the bi-linear complementarity constraint, whereas a solution with respect to a large penalty parameter favors complementarity satisfaction. This motivates a homotopy on the penalty parameter with the aim of finding a good local solution~\cite{leyffer2006interior}. Yet, this is only a heuristic and there is no guarantee of finding the global minimizer as the original NLP~\eqref{algo:LCQP} is nonconvex. Further, a homotopy often avoids convergence to spurious solutions, as for example shown for OCPs with discontinuous systems~\cite{nurkanovic2020limits}. The sequence of these penalized subproblems~\eqref{eq:LCQP:pen:outer} is denoted as the outer loop.

\subsection{The Sequential Convex Programming Approach}
Each outer loop problem is solved using sequential convex programming, resulting in an inner loop. Let $k$ and $j$ denote the outer and inner loop indices, respectively. The very first inner loop is initialized with the initial guess, whereas all consecutive inner loops are initialized with the previous iterate. The penalty function is approximated at each iterate $\xkj$ using its first-order Taylor expansion
\begin{subequations}
    \begin{align*}
        \varphi(x) &\approx \varphi(\xkj) + (x - \xkj)^\top \nabla \varphi(\xkj) \\
        &= \left( \varphi(\xkj) - \xkj^\top C \xkj \right) + x^\top C \xkj.
    \end{align*}
\end{subequations}
Now let $d_{kj} = C \xkj$, and note that $x^\top d_{kj}$ is the only term dependent on $x$. The penalty function is replaced by $d_{kj}^\top x$ resulting in the convex inner loop subproblem
\begin{mini!}
	{x \in \R^n}{\frac{1}{2}x^\top Q x + \left(g + \rho_k d_{kj}\right)^\top x  \label{eq:LCQP:pen:inner:obj}}
	{\label{eq:LCQP:pen:inner}}{}
	\addConstraint{0}{\leq A x - b, \label{eq:LCQP:pen:inner:A}}
    \addConstraint{0}{\leq Lx, \label{eq:LCQP:pen:inner:compl}}
    \addConstraint{0}{\leq Rx. \label{eq:LCQP:pen:inner:comp2}}
\end{mini!}
Denote the unique minimizer of the inner loop subproblem by $\xkj^\ast$ and the according step by $\pkj = \xkj^\ast - \xkj$. Given this inner solution an optimal step length $\alphakj$ is obtained from the globalization scheme described in \secref{subsec:globalization}. Finally, the step update $x_{k,j+1} = \xkj + \alphakj \pkj$ is performed. The inner loop is terminated once a KKT point of the respective outer loop problem \eqref{eq:LCQP:pen:outer} is found. The following lemma relates the minimizers of the inner loop problems with the KKT points of the outer loop. A proof for a more general case can be found in~\cite[Lemma 4.1]{messerer2021survey}.
\begin{lemma}\label{lem:inner:outer:stationarity}
    Let $(\xkj, \rho_k)$ be a feasible iterate of~\eqref{eq:LCQP:pen:inner}. Then the respective inner loop minimizer $\xkj^\ast$ agrees with $\xkj$ iff $\xkj$ is a {\KKT} point of the outer loop problem~\eqref{eq:LCQP:pen:outer} with respect to $\rho_k$.
\end{lemma}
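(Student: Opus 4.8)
The plan is to exploit that the inner subproblem~\eqref{eq:LCQP:pen:inner} and the outer problem~\eqref{eq:LCQP:pen:outer} share exactly the same feasible polyhedron and differ only in their objective, and that the two objectives have the \emph{same gradient at the linearization point} $\xkj$. Indeed, since $\varphi(x) = \tfrac12 x^\top C x$, we have $\nabla \varphi(\xkj) = C\xkj = \dkj$, so the gradient of the outer objective~\eqref{eq:LCQP:pen:outer:obj} evaluated at $\xkj$ is $Q\xkj + g + \rho_k C\xkj = Q\xkj + g + \rho_k \dkj$, which is precisely the gradient of the inner objective~\eqref{eq:LCQP:pen:inner:obj} at $\xkj$.

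First I would write out the {\KKT} system of the inner QP~\eqref{eq:LCQP:pen:inner}: the stationarity condition $Qx + g + \rho_k \dkj - A^\top y_A - L^\top y_L - R^\top y_R = 0$ together with primal feasibility of $Ax \ge b$, $Lx \ge 0$, $Rx \ge 0$ and the usual sign and complementarity conditions on the multipliers $(y_A, y_L, y_R)$. Next I would write out the {\KKT} system of the outer QP~\eqref{eq:LCQP:pen:outer}: it has identical primal feasibility and identical dual conditions, and its stationarity condition reads $Qx + g + \rho_k C x - A^\top y_A - L^\top y_L - R^\top y_R = 0$. Evaluated at $x = \xkj$, the term $\rho_k C \xkj$ equals $\rho_k \dkj$, so the two stationarity conditions coincide; hence $\xkj$ satisfies the inner {\KKT} system with some multiplier $y$ if and only if it satisfies the outer {\KKT} system with the very same $y$.

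It then remains to connect the inner {\KKT} conditions to the statement ``$\xkj^\ast = \xkj$''. Since $0 \prec Q$, the inner subproblem is a strictly convex QP over a polyhedron, so it possesses the unique minimizer $\xkj^\ast$; and since all constraints are affine, the linearity constraint qualification holds at every feasible point, so the {\KKT} conditions are both necessary and sufficient for global optimality of~\eqref{eq:LCQP:pen:inner}. Therefore $\xkj$ is a {\KKT} point of the inner problem iff $\xkj = \xkj^\ast$. Chaining the equivalences gives $\xkj^\ast = \xkj \iff \xkj$ is a {\KKT} point of~\eqref{eq:LCQP:pen:inner} $\iff \xkj$ is a {\KKT} point of~\eqref{eq:LCQP:pen:outer} with respect to $\rho_k$, which is the claim.

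I do not expect a genuine obstacle here; the two points that need care are, first, making explicit that the multipliers transfer verbatim between the two {\KKT} systems because the constraints are literally identical, and second, justifying the {\KKT}/optimality equivalence for the inner problem from strict convexity and affineness of the constraints rather than from the {\LICQ} assumption invoked elsewhere in the paper. One could alternatively phrase the entire argument through the first-order optimality characterization $\nabla f(\xkj)^\top (x - \xkj) \ge 0$ for all feasible $x$, observing that the two objective gradients agree at $\xkj$; this is the route taken for the more general setting in~\cite[Lemma 4.1]{messerer2021survey}.
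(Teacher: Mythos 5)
Your proof is correct; the paper itself gives no proof of this lemma and merely cites \cite[Lemma 4.1]{messerer2021survey}, and your argument is exactly the standard one that reference generalizes: the two problems share the same polyhedral feasible set, their objective gradients coincide at the linearization point $\xkj$ since $\nabla\varphi(\xkj)=C\xkj=\dkj$, so their {\KKT} systems at $\xkj$ are identical, and for the strictly convex inner QP with affine constraints the {\KKT} conditions are equivalent to $\xkj$ being its unique minimizer. You also correctly note that this equivalence rests on the linearity constraint qualification and convexity rather than on the paper's global {\LICQ} assumption.
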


\begin{figure}
    \centering
    \resizebox{\linewidth}{!}{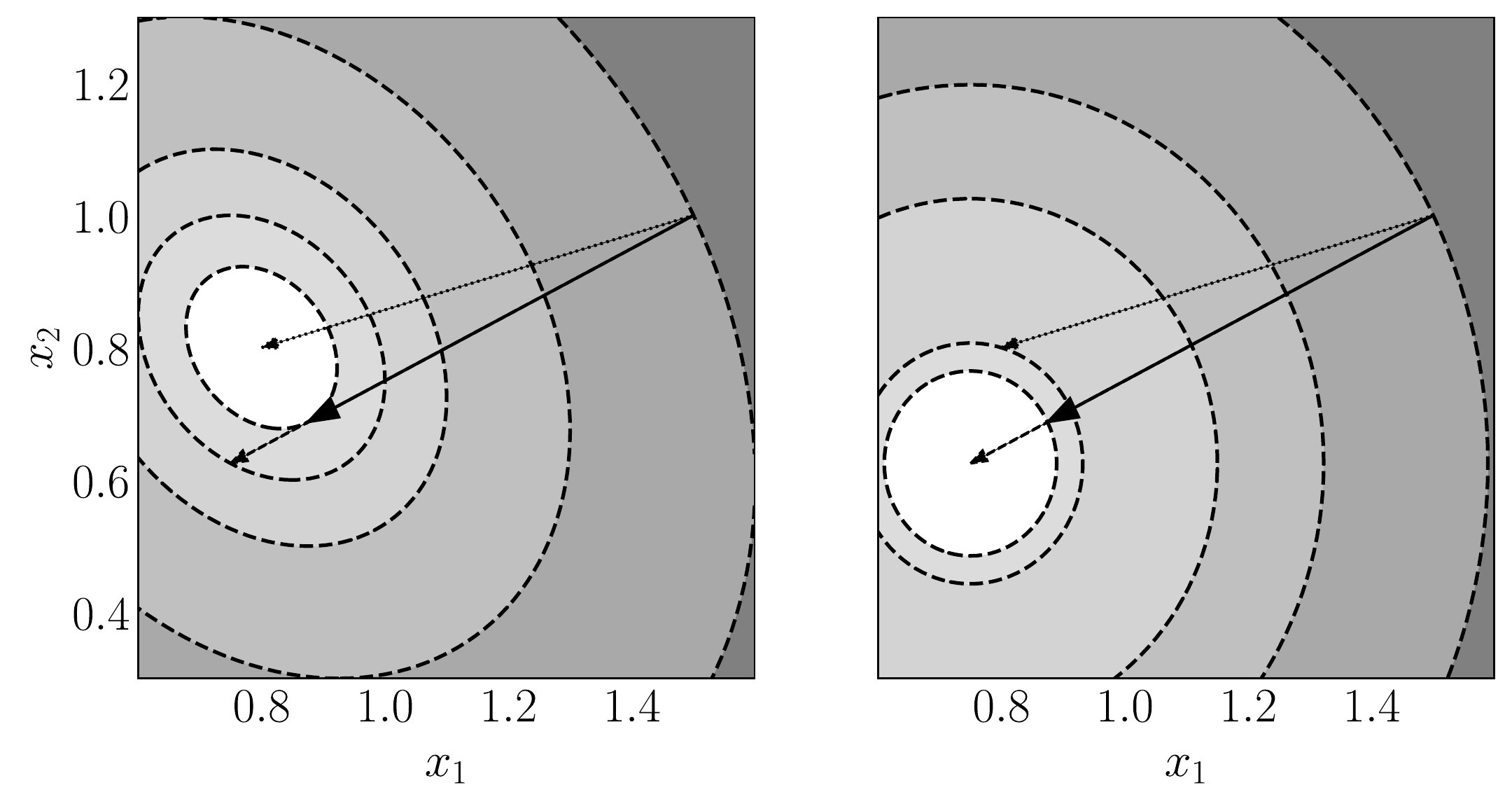}
    \caption{The left plot shows the level lines of the NLP objective~\eqref{eq:LCQP:pen:outer:obj}, and the right plot depicts the level lines of the convex QP subproblem objective~\eqref{eq:LCQP:pen:inner:obj}. Both plots include a total of three vectors. Out of the parallel vectors, the dashed one shows $\pkj = \xkj^\ast - \xkj$, and the solid one is $\xkj + \alphakj \pkj$. The dotted vector indicates the stationary point of the outer loop problem~\eqref{eq:LCQP:pen:outer}. Note that the chosen step length minimizes~\eqref{eq:LCQP:pen:outer:obj} on the path $\xkj + \alpha \pkj$.}
    \label{fig:globalization:step}
\end{figure}

The algorithm is terminated once a complementarity satisfying KKT point of~\eqref{eq:LCQP:pen:outer} is found. Theorem~\ref{theo:penalty:convergence} indicates that the solution must be a strongly stationary point of the original LCQP, under the assumption of exact complementarity satisfaction.

There are two reasons why it can be attractive to replace the full penalty function by its linear approximation. First, convex subproblems are obtained at the cost of the additional inner loop. While the original formulation becomes more and more indefinite as the penalty parameter grows, convexity of the inner loop subproblem is always ensured, as the Hessian matrix remains to be $Q$ in every subproblem. This also induces the second advantage: the Hessian and constraint matrices remain constant over all (inner and outer) iterates. Consequently, the KKT matrix factorization can be reused, and each inner loop problem can be solved efficiently, e.g. by making use of the hot-starting technique employed in active-set solvers such as qpOASES~\cite{Ferreau2008}. With the computation of factorizations being a significant expense, this advantage can outweigh the cost of inner loop iterations, as demonstrated in \secref{sec:numerical:example}.

\subsection{Optimal Step Length Globalization}\label{subsec:globalization}
Consider the merit function
\begin{equation}
    \psi(x, \rho) = \frac{1}{2} x^\top (Q + \rho C) x + g^\top x,
\end{equation}
which coincides with the outer loop objective function. On the other hand, the inner loop objective function provides the strictly convex quadratic model
\begin{equation}\label{eq:quadratic:model}
    \thetakj(x) = \frac{1}{2} x^\top Q x + (g + \rho_k \dkj)^\top x.
\end{equation}
An analytical globalization scheme by is introduced by solving
\begin{equation}\label{min:merit:function}
    \minimize_{\alpha \in [0,1]}\quad \psi(\xkj + \alpha \xkj^\ast, \rho).
\end{equation}
This concept is visualized in \figref{fig:globalization:step}. Evaluating the objective function in~\eqref{min:merit:function} yields the quadratic polynomial in $\alpha$
\begin{equation}\label{eq:line:search:objective}
    \psi(\xkj + \alpha \pkj, \rho_k) = \frac{1}{2}\alpha^2 \qkj + \alpha \lkj + \psi(\xkj, \rho_k),
\end{equation}
where
\begin{subequations}\label{eq:LCQP:OptimalStepLength:Components}
    \begin{align}
        \gammakj &= \pkj^\top Q  \pkj, \\
        \deltakj &= \pkj^\top  \rho_k C \pkj, \\
        \qkj &= \gammakj + \deltakj, \\
        \lkj &= \xkj^\top \left(Q + \rho_k C \right) \pkj + g^\top \pkj.
    \end{align}
\end{subequations}

There are two different cases to be considered for solving~\eqref{min:merit:function}, both related to the sign of $\deltakj$. Both cases are handled individually and their geometric meanings are discussed.

First consider the sign of the linear component. Observe that $\lkj$ represents the directional derivative of the merit function along $\pkj$, i.e.
\begin{equation}
    \nabla \psi(\xkj, \rho_k)^\top \pkj = ((Q + \rho_k C) \xkj + g)^\top \pkj = \lkj.
\end{equation}
This provides descent along $\pkj$ at $\psi(\xkj, \rho_k)$ given $\lkj < 0$. The following guarantee of descent is supplied:
\begin{lemma}[Direction of Descent]\label{lem:direction:of:descent}
    Given a feasible point $\xkj$ of~\eqref{eq:LCQP:pen:outer} and inner loop iterate $\pkj = \xkj^\ast - \xkj$, the merit function at $\xkj$ is nonincreasing towards $\pkj$, i.e.
    \begin{equation}\label{eq:descent}
        \nabla \psi(\xkj, \rho_k)^\top (\xkj^\ast - \xkj) \leq 0.
    \end{equation}

    Further, if $\xkj$ is not a stationary point of~\eqref{eq:LCQP:pen:outer} with respect to $\rho_k$, then
    \begin{equation}\label{eq:strict:descent}
        \nabla \psi(\xkj, \rho_k)^\top (\xkj^\ast - \xkj) < 0.
    \end{equation}
\end{lemma}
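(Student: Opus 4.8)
The plan is to exploit the fact that $\xkj^\ast$ is the global minimizer of the strictly convex quadratic model $\thetakj$ in~\eqref{eq:quadratic:model}, whose gradient at $\xkj$ coincides with the merit function gradient at $\xkj$. Indeed, $\nabla \thetakj(x) = Qx + g + \rho_k \dkj = Qx + g + \rho_k C\xkj$, so that $\nabla\thetakj(\xkj) = Q\xkj + g + \rho_k C\xkj = \nabla\psi(\xkj,\rho_k)$, because $\dkj = C\xkj$. Thus the inner subproblem is locally a first-order model of the merit function at the current iterate, and the key is that minimizing a convex model produces a descent direction for its linearization.

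First I would write down the first-order optimality condition for $\xkj^\ast$ as the minimizer of $\thetakj$ over the polyhedron $\Omega = \Set{x}{Ax \geq b,\ Lx\geq 0,\ Rx\geq 0}$. Since $\thetakj$ is convex and $\Omega$ is a convex set containing $\xkj$, optimality gives the variational inequality $\nabla\thetakj(\xkj^\ast)^\top(x - \xkj^\ast) \geq 0$ for all $x \in \Omega$; in particular, taking $x = \xkj$ yields $\nabla\thetakj(\xkj^\ast)^\top(\xkj - \xkj^\ast) \geq 0$, i.e. $\nabla\thetakj(\xkj^\ast)^\top\pkj \leq 0$. Next I would combine this with strong convexity of $\thetakj$: for a $Q$-strongly convex function, $\bigl(\nabla\thetakj(\xkj^\ast) - \nabla\thetakj(\xkj)\bigr)^\top(\xkj^\ast - \xkj) = \pkj^\top Q \pkj \geq 0$, with equality only if $\pkj = 0$ (since $Q \succ 0$). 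Rearranging gives $\nabla\thetakj(\xkj)^\top\pkj \leq \nabla\thetakj(\xkj^\ast)^\top\pkj - \pkj^\top Q\pkj \leq 0$, and since $\nabla\thetakj(\xkj) = \nabla\psi(\xkj,\rho_k)$ this is exactly~\eqref{eq:descent}.

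For the strict inequality~\eqref{eq:strict:descent}, I would argue by contrapositive: if $\nabla\psi(\xkj,\rho_k)^\top\pkj = 0$, then both inequalities in the chain above must be equalities, so $\pkj^\top Q\pkj = 0$, which forces $\pkj = 0$ by positive definiteness of $Q$, i.e. $\xkj^\ast = \xkj$. By \lemref{lem:inner:outer:stationarity} this means $\xkj$ is a KKT point of~\eqref{eq:LCQP:pen:outer} with respect to $\rho_k$, contradicting the hypothesis. Hence whenever $\xkj$ is not stationary, the directional derivative is strictly negative.

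The only delicate point is the justification of the variational inequality for the constrained minimizer and of the strong-convexity gradient inequality; both are standard (the former is the first-order optimality condition for convex problems over convex sets, the latter follows by adding the two supporting-hyperplane inequalities $\thetakj(\xkj) \geq \thetakj(\xkj^\ast) + \nabla\thetakj(\xkj^\ast)^\top(\xkj-\xkj^\ast) + \tfrac12(\xkj-\xkj^\ast)^\top Q(\xkj-\xkj^\ast)$ and its symmetric counterpart), so I expect no real obstacle — the main care is simply making sure $\xkj \in \Omega$ so that it is an admissible test point in the variational inequality, which holds by the feasibility assumption on $\xkj$.
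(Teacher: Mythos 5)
Your proof is correct, and it takes a mildly but genuinely different route through the middle of the argument. Both you and the paper hinge on the same two observations: that $\nabla\thetakj(\xkj) = \nabla\psi(\xkj,\rho_k)$ because $\dkj = C\xkj$, and that \lemref{lem:inner:outer:stationarity} converts non-stationarity into $\xkj^\ast \neq \xkj$. Where you differ is in how you extract nonpositivity of the directional derivative from the optimality of $\xkj^\ast$. The paper works with function values: it applies the convexity inequality $\nabla\thetakj(a)^\top(b-a) \leq \thetakj(b)-\thetakj(a)$ with $a=\xkj$, $b=\xkj^\ast$ and then uses $\thetakj(\xkj^\ast)\leq\thetakj(\xkj)$, which holds because $\xkj$ is feasible and $\xkj^\ast$ is the global minimizer; strictness comes from uniqueness of the minimizer of the strictly convex model. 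You instead use the variational inequality $\nabla\thetakj(\xkj^\ast)^\top(\xkj-\xkj^\ast)\geq 0$ at the constrained minimizer together with strong monotonicity of the gradient, $\bigl(\nabla\thetakj(\xkj^\ast)-\nabla\thetakj(\xkj)\bigr)^\top\pkj = \pkj^\top Q\pkj \geq 0$. The paper's version is slightly more elementary (plain convexity plus a function-value comparison), but yours buys something concrete: the chain gives the quantitative estimate $\nabla\psi(\xkj,\rho_k)^\top\pkj \leq -\pkj^\top Q\pkj \leq -\lambda_{\min}(Q)\|\pkj\|^2$, which is strictly stronger than~\eqref{eq:descent} and is exactly the kind of sufficient-decrease inequality the paper remarks is still missing from its convergence analysis of the inner loop. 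Your handling of the delicate points is also sound — you correctly flag that $\xkj$ must be feasible to serve as a test point in the variational inequality, which is guaranteed by the hypothesis of the lemma.
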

\begin{proof}
    Since $\xkj^\ast$ is the global minimum of the inner loop optimization problem, the following relation holds
    \begin{equation}
        \thetakj(\xkj^\ast) \leq \thetakj(x),
    \end{equation}
    where $x$ is any feasible point of~\eqref{eq:LCQP:pen:outer}. Since  $\thetakj$ is convex and differentiable it holds for any $a, b \in \R^n$ that
    \begin{equation}
        \nabla \thetakj(a)^\top (b-a) \leq \thetakj(b) - \thetakj(a).
    \end{equation}
    This property provides descent for the quadratic model
    \begin{equation}\label{eq:quadratic:model:descent}
        \nabla \thetakj(\xkj)^\top \pkj \leq \thetakj(\xkj^\ast) - \thetakj(\xkj) \leq 0,
    \end{equation}
    Note that this inequality is strict if $\xkj \neq \xkj^\ast$. Recall that $C \xkj = \dkj$, then the equation
    \begin{subequations}\label{eq:model:and:merit:agree}
        \begin{align}
            \nabla \psi(\xkj, \rho_k)^\top \pkj &= (Q \xkj + \rho_k C \xkj + g)^\top \pkj \\
            &= \nabla \thetakj(\xkj)^\top \pkj
        \end{align}
    \end{subequations}
    shows that the directional derivatives of the merit function and quadratic model at $\xkj$ towards $\pkj$ agree. Inequality~\eqref{eq:descent} follows immediately.

    Assume that $\xkj$ is not outer loop stationary. Then \lemref{lem:inner:outer:stationarity} yields $\xkj^\ast \neq \xkj$. As captured before, the inequality~\eqref{eq:quadratic:model:descent} becomes strict, and again Equation~\eqref{eq:model:and:merit:agree}  shows the statement.
\end{proof}

\begin{figure}
    \centering
    \resizebox{0.95\linewidth}{!}{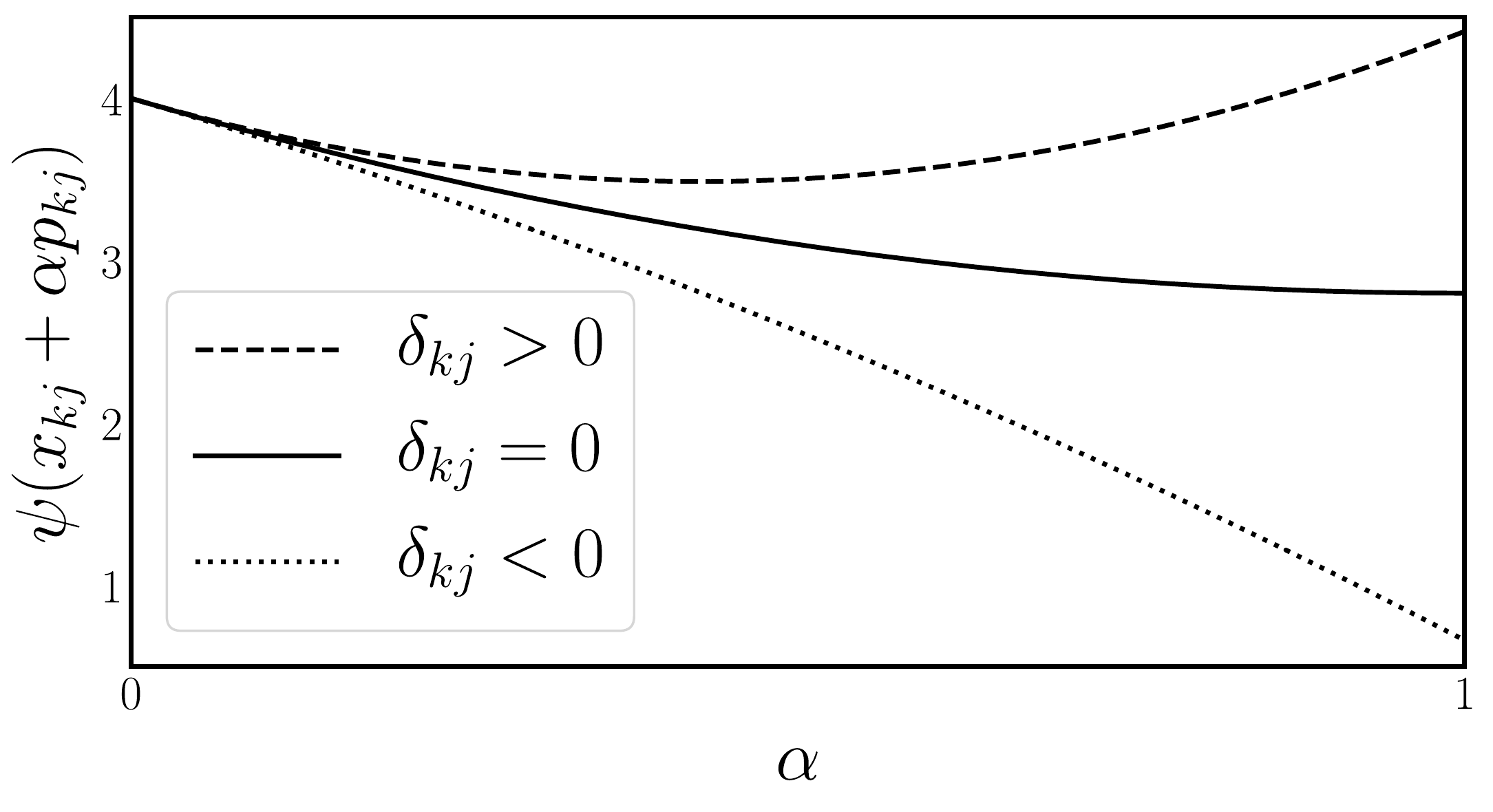}
    \caption{Illustration of the merit function~\eqref{eq:line:search:objective} for $\deltakj > 0$ (dashed), $\deltakj < 0$ (dotted), and the quadratic model $\deltakj = 0$ (solid) along the path $\xkj + \alpha \pkj$ for $\alpha \in [0,1]$. Note that all functions must have equal descent at $\xkj$, as the linearization of the merit function and quadratic model agree at this point (dotted line). Further, note that the quadratic model is minimized for $\alpha = 1$, as it coincides with the unique minimizer of the inner loop subproblem.}
    \label{fig:merit:samples}
\end{figure}

Now consider the sign of $\deltakj$, which determines whether the curvature of the merit function along $\pkj$ is more or less positive than the curvature of the quadratic model. The positive case $\deltakj > 0$ indicates that the merit function along $\pkj$ has a stronger positive curvature, and thus is minimized before the full step. The optimal step length in this case is given by
\begin{equation}\label{eq:step:length:formula}
    \alphakj = \frac{-\lkj}{\qkj}.
\end{equation}
Note that $\qkj > \deltakj > 0$. Assuming that $\xkj$ is not already a stationary point, a strictly positive step length is obtained. The nonpositive case $\deltakj \leq 0$ leads to $\alphakj = 1$, as the curvature of the merit function is less positive than the quadratic model. Both scenarios are visualized in \figref{fig:merit:samples}. Though this strategy provides the best available step length at each iterate, there does not yet exist a statement for sufficient decrease, which would ensure convergence of the inner loop. However, in practice this has not caused any complications.

\subsection{Pseudo Code}\label{subsec:pseudo}
This section provides a pseudo code description of the presented approach, which summarizes all features described before. For simplicity, the inner loop index $j$ is dropped. The algorithm requires the input of an initial guess $\texttt{x0} \in \R^n$ and initial penalty parameter $\texttt{rho} > 0$, as well as a penalty update factor $\texttt{beta} > 1$. The output of this pseudo code is a stationary point of the LCQP~\eqref{eq:LCQP}. The objective's Hessian is denoted by \texttt{Q} and its linear term by \texttt{g}. The matrix \texttt{A} contains all linear constraints, i.e. it includes the rows of $A, L$ and $R$. During the initialization of the QP solver, the {\KKT} matrix is assumed to be factorized. \lineref{line:while:outer} begins the outer loop, where termination is checked and the penalty value is updated. The inner loop begins in \lineref{line:while:inner}, in which the subproblem~\eqref{eq:LCQP:pen:inner} is solved and step updates are performed. The variable \texttt{xn} represents the inner loop minimizer $\xkj^\ast$. The inner loop is terminated if the outer loop stationarity, denoted by \texttt{Stat(xk, yk, rho)}, drops below a certain threshold \texttt{tol}. Similarly, the outer loop is terminated once the complementarity value, denoted by \texttt{Phi(xk)}, falls below this tolerance.
\begin{algorithm}
    \vspace{0.5em}
    \KwIn{\texttt{x0, rho, beta}}
    \KwOut{Stationary point \texttt{(xk, yk)} of LCQP~\eqref{eq:LCQP}}\vspace{0.5em}

    \textcolor{gray}{\# Create QP solver and factorize KKT matrix} \\
    \texttt{qp(Q, g, A, lb, ub, lbA, ubA)}; \label{line:factorize}\\[0.5em]

    \textcolor{gray}{\# Initialize solver with zero penalty QP} \\
    \texttt{(xk, yk) = qp.solve()}; \label{line:init}\\[0.5em]

    \textcolor{gray}{\# Outer loop (penalty update loop)} \\
    \While{\textup{true}}{ \label{line:while:outer} \vspace{0.5em}

        \textcolor{gray}{\# Inner loop (approximate penalty function)} \\
        \While{\textup{\texttt{Stat(xk, yk, rho) > tol}}}{ \label{line:while:inner} \vspace{0.5em}
            \textcolor{gray}{\# Update objective's linear component}\\
            \texttt{qp.update\_g(g + rho*dk)}; \label{line:update:g} \\[0.5em]

            \textcolor{gray}{\# Step computation (solve~\eqref{eq:LCQP:pen:inner})}\\
            \texttt{(xn, yk) = qp.solve()}; \label{line:hotstart} \\[0.5em]

            \textcolor{gray}{\# Get optimal step length according to~\eqref{subsec:globalization}}\\
            \texttt{alpha = StepLength(xk, xn, rho)};\label{line:optimalStepLength} \\[0.5em]

            \textcolor{gray}{\# Perform step}\\
            \texttt{xk = xk + alpha*(xn - xk)}\;
        }\vspace{0.5em}

        \textcolor{gray}{\# Terminate or increase penalty parameter}\\
        \If{ \textup{\texttt{Phi(xk) < tol}}}{ \label{line:comp:check}
        \texttt{return (xk, yk)}\;
        }
        \texttt{rho = beta*rho}\;  \label{line:pen:update}
    }
    \caption{Pseudo code of the approach}
    \label{algo:LCQP}
\end{algorithm}

\section{A Numerical Example}\label{sec:numerical:example}
This section briefly discusses a numerical benchmark by solving the illustrative OCP~\cite[Section 2]{stewart2010optimal}
\begin{mini!}
	{x_0 \in \R, x(\cdot)}{\int_0^2 x(t)^2 \text{d}t + (x(2) - 5/3)^2}
	{\label{eq:IVOCP}}{}
    \addConstraint{x(0)}{= x_0,}{\label{eq:IVOCP:initial:value}}
    \addConstraint{\dot{x}(t)}{\in 2 - \sign (x(t)),}{\quad t \in [0, 2] \label{eq:IVOCP:dynamics}}.
\end{mini!}
The effective degree of freedom in this optimization problem is the initial value $x_0$. Though the constraint~\eqref{eq:IVOCP:dynamics} is a discontinuous ODE, it has a unique solution given by a piecewise linear function with slope $3$ for $x(t) < 0$ and slope $1$ for $x(t) > 0$. The ODE describes a Filippov differential inclusion, which can be reformulated into a dynamic complementarity system~\cite{nurkanovic2020limits}. This method introduces three algebraic variables $y(\cdot), \lambda^-(\cdot), \lambda^+(\cdot)$, which describe the switch in the ODE, the negative part of $x$ and the positive part of $x$, respectively. However, $\lambda^+ = x + \lambda^-$ can be eliminated. This OCP is discretized using implicit Euler in order to obtain the LCQP
\begin{mini*}
    {\substack{x_0,\dots,x_N \in \R \\ y_0,\dots,y_{N-1} \in \R \\ \lambda^-_0,\dots,\lambda^-_{N-1} \in \R}}{\sum_{k=0}^{N-1} E_k(x_k) + E_N(x_N)}
    {\label{eq:IVOCP:LCQP}}{}
    \addConstraint{x_{k-1} + h \bigl(3 - 2y_k\bigr) =}{~x_k,}{\quad 1 \leq k < N,}
    \addConstraint{0 \leq x_k + \lambda^-_k \perp 1 - y_k \geq }{~0,}{\quad 1 \leq k < N,}
    \addConstraint{0 \leq \lambda^-_k \perp y_k \geq }{~0,}{\quad 1 \leq k < N,}
\end{mini*}
where $E_k$ for $0 \leq k < N$ represents the quadrature formula of the implicit Euler discretization, and $E_N$ the terminal cost. The discretized problem represents an LCQP (after a small regularization on the algebraic variables).

In the following, five different solution variants are compared, two of which are based on a MATLAB implementation of the presented algorithm. These two methods differ only in the used linear solver within the QP subproblem solver qpOASES~\cite{Ferreau2014}: one uses the default dense solver, whereas the other utilizes the Schur complement method for which the sparse solver MA57~\cite{ma57} is required. These methods are denoted by LCQP and LCQP Schur, respectively. The remaining three methods are all solved with IPOPT~\cite{wachter2006implementation} through the CasADi interface~\cite{Andersson2018}: one method, denoted by IPOPT Pen, solves the exact same outer loop problems as the LCQP methods, and the other two strategies solve a homotopy of \mbox{(in)equality} regularization schemes. The regularization schemes replace the complementarity product with $x^\top L^\top R x \leq \sigma$ for the relaxed method, and $x^\top L^\top R x = \sigma$ for the smoothed method, both for some $\sigma > 0$ (see \cite{nurkanovic2020limits} for details). These methods are denoted by IPOPT Smoothed and IPOPT Relaxed. The source code of this benchmark is available at \url{https://github.com/hallfjonas/IVOCP}.

\tabref{tab:IVOCP:Values} provides the average complementarity satisfaction together with the average absolute distance to the analytical solution, showing that the proposed algorithm has the highest quality in both aspects. In fact, complementarity is satisfied up to machine precision, which is favoured by having an active-set solver on the subproblem level. Solutions computed with the IPOPT penalty method achieve significantly less precision (due to its conflicting barrier penalty), and the regularization schemes only achieve a low complementarity satisfaction naturally. \figref{fig:IVOCP:TimePlot} shows the average CPU times of this experiment. The introduced method outperforms all other approaches in the first few discretizations. This originates from the fact that the factorization of the KKT matrix is reused, whereas IPOPT is required to recompute it after each penalty update. As the experiments gain size, IPOPT performs better in terms of CPU time than the introduced algorithm, which is due to its exploitation of sparsity structures. However, if a solver like MA57 is available, the LCQP Schur method is able to outperform IPOPT in all experiments. Both regularized methods are unable to compete against the penalty approaches for moderately sized formulations.
\begin{figure}
    \centering
    \resizebox{0.99\linewidth}{!}{
%
%
\definecolor{mycolor1}{rgb}{0.90000,0.80000,0.00000}%
\begin{tikzpicture}

\begin{axis}[%
width=4.602in,
height=2.5in,
at={(0.772in,0.481in)},
scale only axis,
xmin=50,
xmax=150,
xlabel style={font=\color{white!15!black}},
xlabel={Number of discretization nodes},
ymode=log,
ymin=0.04055967,
ymax=2.8,
yminorticks=true,
ylabel style={font=\color{white!15!black}},
ylabel={Average CPU time per OCP $[\mathrm{s}]$},
axis background/.style={fill=white},
xmajorgrids,
ymajorgrids,
yminorgrids,
legend style={at={(0.97,0.03)}, anchor=south east, legend cell align=left, align=left, draw=white!15!black}
]
\addplot [color=black, dashed, line width=1.3pt, mark=square, mark options={solid, black}]
  table[row sep=crcr]{%
50	0.06898567\\
55	0.09245426\\
60	0.11773712\\
65	0.14647106\\
70	0.1870676\\
75	0.23057482\\
80	0.27066695\\
85	0.33157401\\
90	0.40738505\\
95	0.4928816\\
100	0.61719134\\
105	0.7322376\\
110	0.83406295\\
115	0.99433188\\
120	1.16863204\\
125	1.38803694\\
130	1.59405401\\
135	1.75190561\\
140	2.02785308\\
145	2.26604749\\
150	2.59156045\\
};
\addlegendentry{LCQP}

\addplot [color=green!60!darkgray, line width=1.3pt, mark=diamond, mark options={solid, green!60!darkgray}]
  table[row sep=crcr]{%
50	0.04055967\\
55	0.05221558\\
60	0.05916015\\
65	0.06730944\\
70	0.07762515\\
75	0.08969859\\
80	0.10569368\\
85	0.10990305\\
90	0.12651782\\
95	0.1397995\\
100	0.16452122\\
105	0.16376489\\
110	0.17991027\\
115	0.18924835\\
120	0.20447138\\
125	0.22755326\\
130	0.24329334\\
135	0.2554187\\
140	0.27657312\\
145	0.29500807\\
150	0.31114838\\
};
\addlegendentry{LCQP Schur}

\addplot [color=blue, dotted, line width=1.3pt, mark=x, mark options={solid, blue}]
  table[row sep=crcr]{%
50	0.18252535\\
55	0.21191938\\
60	0.23965263\\
65	0.22468275\\
70	0.26034204\\
75	0.29326266\\
80	0.28125606\\
85	0.28210311\\
90	0.31123603\\
95	0.30972475\\
100	0.3703952\\
105	0.42526814\\
110	0.410676\\
115	0.47146203\\
120	0.47705895\\
125	0.50288598\\
130	0.45966498\\
135	0.60762244\\
140	0.53666123\\
145	0.54449966\\
150	0.58659037\\
};
\addlegendentry{IPOPT Pen}

\addplot [color=mycolor1, dashdotted, line width=1.3pt, mark=triangle, mark options={solid, rotate=270, mycolor1}]
  table[row sep=crcr]{%
50	0.25161507\\
55	0.45966039\\
60	0.40839856\\
65	0.31297214\\
70	0.30691685\\
75	0.43094233\\
80	0.42126029\\
85	0.64195437\\
90	0.44871791\\
95	0.56810989\\
100	0.42588849\\
105	0.66409007\\
110	0.71304798\\
115	0.68886524\\
120	0.74511077\\
125	1.2903421\\
130	0.80773393\\
135	0.80889952\\
140	0.73055111\\
145	0.90457585\\
150	0.9295715\\
};
\addlegendentry{IPOPT Smoothed}

\addplot [color=red, dashdotted, line width=1.3pt, mark=o, mark options={solid, red}]
  table[row sep=crcr]{%
50	0.44601697\\
55	0.35966932\\
60	0.43041438\\
65	0.44528994\\
70	0.44470065\\
75	0.50862955\\
80	0.51657739\\
85	0.56513487\\
90	0.62268577\\
95	0.63155478\\
100	0.76542231\\
105	1.04550892\\
110	1.07073167\\
115	1.00530429\\
120	0.90423264\\
125	1.09505399\\
130	1.37484965\\
135	1.38517474\\
140	1.09609889\\
145	1.15655523\\
150	1.33517453\\
};
\addlegendentry{IPOPT Relaxed}

\end{axis}
\end{tikzpicture}
    \caption{Plotting the average CPU time required for each method and discretization size to solve $100$ differently initialized LCQPs.}
    \label{fig:IVOCP:TimePlot}
\end{figure}
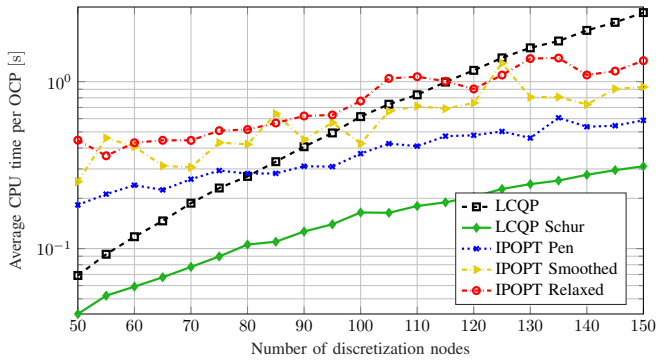

\section{Conclusions and Future Work}\label{sec:conclusion:future:work}
This work presented a novel SCP approach to solving LCQPs. A computationally cheap globalization strategy with the guarantee of merit function descent at each iterate was introduced. Its applicability and promising performance was demonstrated by solving an initial value optimal control problem. A comparison against state-of-the-art solution variants solved by a high performance NLP-solver showed that the algorithm is able to compete in all of the three categories: solution CPU time, complementarity satisfaction, and solution quality. Future work aims at providing an open-source software package to reliably solve LCQPs. The presented algorithm will be implemented with multiple QP solvers on the subsolver level (e.g. qpOASES~\cite{Ferreau2014} and OSQP~\cite{osqp}). Further, the option to solely solve the outer loop homotopy with an adequate solver could be provided. On the theoretical side, future work consists of providing a proof of global convergence regarding both inner and outer loops. Additionally, the presented algorithm could be utilized on a subsolver level for solving nonlinear MPCCs opening up applicability to a wider range of problems.

\bibliographystyle{ieeetr}
\bibliography{bibtex/biblio}

\begin{table}[t]
    \centering
    \ra{1.3}
    \caption{Average values over all experiments.}
    \begin{tabular}{@{}lrr@{}}\toprule
                          & complementarity              & distance to analytical solution \\ \midrule
      LCQP                & $\bm{\expnumber{6.8}{-17}}$  & $\bm{0.018}$      \\
      LCQP Schur          & $\expnumber{2.3}{-16}$       & $\bm{0.018}$      \\
      IPOPT Penalty       & $\expnumber{1.4}{-04}$       & $0.072$ \\
      IPOPT Smoothed      & $\expnumber{1.6}{+04}$       & $0.078$      \\
      IPOPT Relaxed       & $\expnumber{6.0}{+03}$       & $0.61$       \\\bottomrule
    \end{tabular}
    \label{tab:IVOCP:Values}
\end{table}

\end{document}